\DeclareMathOperator{\sgn}{sgn}
\newtheorem{theorem}{Theorem}
\newtheorem{proposition}[theorem]{Proposition}
\newtheorem{lemma}[theorem]{Lemma}
\theoremstyle{remark}
\newtheorem{remark}{Remark}
\newcommand{\beq}{\begin{equation}}
\newcommand{\eeq}{\end{equation}}
\newcommand{\rmd}{\mathrm{d}}
\newcommand{\rmi}{\mathrm{i}}
\newcommand{\rmI}{\mathrm{I}}
\newcommand{\R}{\mathbb{R}}
\newcommand{\Z}{\mathbb{Z}}
\newcommand{\txs}{\textstyle}
\newcommand{\oP}{\overline{P}}
\newcommand{\ud}{\frac{1}{2}}
\newcommand{\skd}{\vspace*{0.2cm}}
\begin{document}

\title[A fast algorithm for inversion of Abel's transform]
{A fast algorithm for the inversion of Abel's transform}

\author{Enrico De Micheli}
\address{IBF - Consiglio Nazionale delle Ricerche,
Via De Marini, 6 - 16149 Genova, Italy}
\email{enrico.demicheli@cnr.it}

\subjclass[2010]{45E10,44A15,65R32}

\keywords{Abel inversion, Legendre polynomials, Radio occultation,
Plasma emission coefficients, Inverse problems, Stability estimates}

\begin{abstract} 
We present a new algorithm for the computation of the inverse Abel transform, a problem which
emerges in many areas of physics and engineering.
We prove that the Legendre coefficients of a given function coincide with the 
Fourier coefficients of a suitable periodic 
function associated with its Abel transform. This allows us to compute the Legendre coefficients 
of the inverse Abel transform in an easy, fast and accurate way by means of a single Fast Fourier Transform.
The algorithm is thus appropriate also for the inversion of Abel integrals given in terms 
of samples representing noisy measurements. Rigorous stability estimates are proved
and the accuracy of the algorithm is illustrated also by some numerical experiments.
\end{abstract}


\maketitle

\section{Introduction}
\label{se:introduction}

The subject of this paper is the analysis and the numerical solution of the Abel integral equation of the first kind:
\beq
g(x) = (Af)(x) \doteq \int_{0}^x \frac{f(y)}{\sqrt{x-y}}\,\rmd y \qquad (0\leqslant x\leqslant 1).
\label{1.1}
\eeq
In \eqref{1.1}, $g(x)\in L^2(0,1)$ represents the known data function, and $f(x)\in L^2(0,1)$ is the unknown 
function to be computed. We can assume, with no loss of generality, $g(0)=0$. Therefore, equation
\eqref{1.1} defines a linear compact operator $A:L^2(0,1) \to L^2(0,1)$ \cite{Gorenflo1}.
Abel's integral equation plays an important role in many areas of science. Its most extensive use is for 
the determination of the radial distribution of cylindrically symmetric physical quantities, e.g.
the plasma emission coefficients, from line-of-sight 
integration measurements. In X-ray tomography, the object being analyzed is illuminated by parallel X-ray beams 
and an Abel equation of type \eqref{1.1} relates the intensity profile of the transmitted rays 
(the data function $g$) to the object's radial density profile (the unknown function $f$) \cite{Abramson,Asaki2}.
Abel inversion is widely used in plasma physics to obtain the 
electronic density from phase-shift maps obtained by laser interferometry \cite{Park}
or radial emission patterns from observed plasma radiances \cite{Fasoulas,Merk}. 
Photoion and photoelectron imaging in molecular dynamics \cite{Dribinski}, evaluation of mass density and velocity laws of stellar winds in 
astrophysics \cite{Craig,Knill}, and atmospheric radio occultation signal analysis \cite{Healy,Thomson}  
are additional fields which frequently require the numerical solution of Abel's equations of type \eqref{1.1}.

The exact solution to \eqref{1.1} traces back to Abel's memoir \cite{Abel} (see also \cite{Courant}):
\beq
f(y) = \frac{1}{\pi}\frac{\rmd}{\rmd y} \int_{0}^y \frac{g(x)}{\sqrt{y-x}}\,\rmd x
= \frac{1}{\pi}\int_{0}^y \frac{g'(x)}{\sqrt{y-x}}\,\rmd x \qquad (\mathrm{with} \ g(0)=0),
\label{1.2}
\eeq
and existence results are conveniently given 
in Ref. \cite{Gorenflo1} for pairs of functions $f$ and $g$ belonging 
to a variety of functional spaces (e.g., H\"older spaces and Lebesgue spaces).

Actual difficulties arise when the Abel inversion has to be computed from input data which are noisy
and finite in number, as when the data represent experimental measurements.
In this case Eq. \eqref{1.2} is often 
of little practical utility since it requires the numerical differentiation which tends to amplify the errors. 
The Abel inversion is in fact a (mildly) ill-posed problem since the solution does not depend continuously 
on the data: slight inaccuracies in the input data may lead to a solution very \emph{far} from the true 
one. Stated in other words, since the operator $A:L^2(0,1) \to L^2(0,1)$ is compact, 
its inverse $A^{-1}$ cannot be continuous in the $L^2$-norm \cite{Gorenflo1}. 
It is therefore crucial to set up numerical algorithms which yield a stable solution to problem \eqref{1.1}.
For these reasons, various numerical methods for the inversion of Abel's operator have been proposed.
In Refs. \cite{Deutsch,Fasoulas,Gueron}
input data are represented through cubic spline 
and then the inverse Abel transformation is applied to get the solution. 
Iterative schemes \cite{Vicharelli} have been proved 
to be rather stable but are time-consuming. 
Inversion techniques based on the Fourier-Hankel transform are discussed in Refs. \cite{Alvarez,Kalal}.
Numerical methods developed within regularization schemes 
suitable for problem \eqref{1.1} have also been presented \cite{Asaki1,Gorenflo2}.
The representation of input data and solution in various orthonormal basis in Hilbert spaces, coupled with the 
exact inversion of the Abel integral operator, has been exploited in refs. \cite{Dribinski,Garza,Li}.
The importance of using orthogonal polynomials for the stable solution of problem \eqref{1.1} has been recognized
for a long time \cite{Minerbo}. The approximation of the unknown solution by Jacobi polynomials \cite{Ammari1},
Legendre polynomials \cite{Ammari1,Ammari2} and Chebyshev polynomials \cite{Pandey,Sugiura}
has been proposed for the inversion of the Abel integral operator.

In this paper we present a new procedure for the computation of the inverse Abel transform.
In Section \ref{se:inversion} we prove that the Legendre coefficients of the solution $f(x)$ 
to problem \eqref{1.1} coincide with the Fourier coefficients 
of a suitable function associated with the data $g(x)$. The role of noise is studied in Section 
\ref{se:regularization} where we focus on the regularization of problem \eqref{1.1} within the spectral cut-off 
scheme and introduce a suitably regularized solution $f_N^{(\varepsilon)}(x)$, $\varepsilon$ being a parameter 
which represents the amount of noise on the data. 
Rigorous stability estimates for the proposed solution are then proved in the same
Section \ref{se:regularization}, where we give upper bounds on the reconstruction error which depend on the 
smoothness properties of the solution and on the level of noise $\varepsilon$. 
The algorithm produced by this analysis results to be extremely simple and fast since the $N$ coefficients of the 
regularized solution can be computed very efficiently by means of a single Fast Fourier Transform in
$\mathcal{O}(N\log N)$ time. This 
attractive feature makes the algorithm particularly suitable for the Abel inversion of 
functions represented by samples, e.g., noisy experimental measurements, given on nonequispaced points of 
the Abel transform domain since the core of the computation can be simply performed by means of a 
nonuniform fast Fourier transform.
Finally, in Section \ref{se:numerical} we illustrate some numerical experiments which exemplify the theoretical 
analysis and give a taste of the stability of the algorithm for the computation of the inverse Abel transform 
for solutions with different smoothness properties and various levels of noise on the data.

\section{Inversion of the Abel transform by Legendre expansion}
\label{se:inversion}

For convenience, let us define the following intervals of the real line: $E \doteq (0,1)$, 
$\Omega \doteq (-\pi,\pi)$.
Consider the \emph{shifted} Legendre polynomials $\oP_n(x)$, which are defined by:
\beq
\oP_n(x) = \sqrt{2n+1} \, P_n(2x-1),
\label{2.-1}
\eeq
where $P_n(x)$ denote the \emph{ordinary} Legendre polynomials, defined by the generating function \cite{Erdelyi2}:
\beq
\sum_{n=0}^\infty P_n(x)\,t^n = \left(1-2xt+t^2\right)^{-\ud}.
\label{2.0}
\eeq
The \emph{shifted} Legendre polynomials $\{\oP_n(x)\}_{n=0}^\infty$ form a complete orthonormal basis for $L^2(E)$.
The (\emph{shifted}) Legendre expansion of a function $f(x)\in L^2(E)$ reads: 
\beq
f(x) = \sum_{n=0}^\infty c_n \, \oP_n(x) \qquad (x \in E),
\label{2.1}
\eeq
with coefficients $c_n=(f,\oP_n)$ (where $(\cdot,\cdot)$ denotes the scalar product in $L^2(E)$):
\beq
c_n = \int_{0}^1 f(x)\,\oP_n(x)\,\rmd x \qquad (n\geqslant 0).
\label{2.2}
\eeq 

We can now prove the following theorem which connects the Legendre coefficients of a function $f(y)$ with its
Abel transform $g(x)$.

\begin{theorem}
\label{the:4}

Let $g$ denote the Abel transform \eqref{1.1} of the function $f\in L^2(E)$.
Then the inverse Abel transform $f=(A^{-1}g)$ can be written as:
\beq
f(x) = \sum_{n=0}^{\infty} c_n \,\oP_n(x) \qquad (x\in E),
\label{t.2}
\eeq
where $c_n=(-1)^n\sqrt{2n+1}\,\hat{\gamma}_n$ and the coefficients $\{\hat{\gamma}_n\}_{n=0}^\infty$
coincide with the Fourier coefficients (with $n\geqslant 0$) 
of the $2\pi$-periodic auxiliary function $\eta(t)$ $(t\in\R)$, whose restriction to the interval $t \in [-\pi,\pi)$ 
is given by
\beq
\eta(t) \doteq \frac{\sgn(t)}{2\pi\rmi}\,e^{\rmi t/2} \, g\left(\sin^2\frac{t}{2}\right) 
\qquad (t\in[-\pi,\pi)),
\label{t.4}
\eeq
where $\sgn(t)$ denotes the sign function, that is, we have:
\beq
\hat{\gamma}_n
\doteq\frac{(-1)^n}{\sqrt{2n+1}}\,c_n = \int_{-\pi}^\pi \eta(t)\,e^{\rmi n t}\,\rmd t \qquad (n \geqslant 0).
\label{t.3}
\eeq
\end{theorem}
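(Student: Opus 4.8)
The statement \eqref{t.2} is just the Legendre expansion \eqref{2.1} of $f\in L^2(E)$, which converges in $L^2(E)$ by completeness of $\{\oP_n\}$; the real content is the identity \eqref{t.3} relating the Legendre coefficients $c_n$ to the Fourier integrals $\hat{\gamma}_n$. My plan is to evaluate $\int_{-\pi}^{\pi}\eta(t)\,e^{\rmi n t}\,\rmd t$ directly, starting from the definition \eqref{t.4} of $\eta$, and to show that it equals $(-1)^n\int_0^1 f(x)\,P_n(2x-1)\,\rmd x$, which by \eqref{2.-1} and \eqref{2.2} is precisely $(-1)^n c_n/\sqrt{2n+1}$.

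First I would eliminate the factor $\sgn(t)$ in \eqref{t.4} using parity. Since $g(\sin^2(t/2))$ is an even function of $t$, splitting $\int_{-\pi}^{\pi}$ at the origin and sending $t\mapsto -t$ on the negative half combines the exponentials $e^{\rmi(n+1/2)t}$ and $e^{-\rmi(n+1/2)t}$ into a sine, giving
\beq
\int_{-\pi}^{\pi}\eta(t)\,e^{\rmi n t}\,\rmd t
=\frac{1}{\pi}\int_{0}^{\pi} g\!\left(\sin^2\frac{t}{2}\right)\sin\!\left(\left(n+\uds\right)t\right)\rmd t .
\eeq
Next I would insert the Abel equation \eqref{1.1} and change the inner integration variable by $y=\sin^2(s/2)$. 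Using $\sin^2(t/2)-\sin^2(s/2)=\uds(\cos s-\cos t)$ and $\rmd y=\uds\sin s\,\rmd s$, the data are expressed as
\beq
g\!\left(\sin^2\frac{t}{2}\right)=\frac{1}{\sqrt{2}}\int_{0}^{t}\frac{f(\sin^2(s/2))\,\sin s}{\sqrt{\cos s-\cos t}}\,\rmd s \qquad (0\leqslant t\leqslant\pi).
\eeq

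Substituting this into the previous display yields a double integral over the triangle $0\leqslant s\leqslant t\leqslant\pi$. Interchanging the order of integration — legitimate because the singularity $(\cos s-\cos t)^{-1/2}$ is integrable and $f\in L^2(E)$ — leaves the inner $t$-integral $\int_{s}^{\pi}\sin((n+\uds)t)\,(\cos s-\cos t)^{-1/2}\,\rmd t$. The crux of the argument, and the step I expect to be the main obstacle, is to recognize this inner integral as the Mehler--Dirichlet integral representation of the Legendre polynomial, namely $\int_{s}^{\pi}\sin((n+\uds)t)\,(\cos s-\cos t)^{-1/2}\,\rmd t=(\pi/\sqrt{2})\,P_n(\cos s)$.

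Granting this identity, the double integral collapses to $\uds\int_{0}^{\pi} f(\sin^2(s/2))\,P_n(\cos s)\,\sin s\,\rmd s$. Reverting to the variable $x=\sin^2(s/2)$, under which $\cos s=1-2x$ and hence $P_n(\cos s)=P_n(-(2x-1))=(-1)^n P_n(2x-1)$, while $\sin s\,\rmd s=2\,\rmd x$, this becomes exactly $(-1)^n\int_{0}^{1} f(x)\,P_n(2x-1)\,\rmd x$. Comparing with $c_n=\sqrt{2n+1}\int_0^1 f(x)\,P_n(2x-1)\,\rmd x$ then gives \eqref{t.3}. One sees at the end that the sign $(-1)^n$ originates from the reflection $P_n(-z)=(-1)^n P_n(z)$ forced by the substitution $x=\sin^2(t/2)$, while the factor $\sqrt{2n+1}$ is merely the normalization of the shifted Legendre basis in \eqref{2.-1}.
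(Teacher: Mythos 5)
Your proposal is correct and rests on the same two ingredients as the paper's proof: the Mehler--Dirichlet (Dirichlet--Murphy) integral representation of $P_n(\cos\theta)$ and an interchange of the order of integration over the triangle. You merely run the computation in the opposite direction (from $\hat{\gamma}_n$ back to $c_n$ rather than from $c_n$ to $\hat{\gamma}_n$) and use the real sine form of the representation on $[s,\pi]$, which lets the parity of $\eta$ absorb the $\sgn(t)$ factor up front instead of via the paper's splitting of $\int_u^{2\pi-u}$ and the shift $t\to t-2\pi$; the substance is the same.
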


\begin{proof} 
From \eqref{2.2}, using definition \eqref{2.-1}, and changing the variable of integration 
$x \to (1-\cos u)/2$ we have:
\beq
\begin{split}
c_n &= \sqrt{2n+1} \int_{0}^1 f(x)\,P_n(2x-1)\,\rmd x \\
&= \frac{(-1)^n \sqrt{2n+1}}{2} \int_{0}^\pi f\left(\frac{1-\cos u}{2}\right)\,P_n(\cos u)\,\sin u\,\rmd u.
\end{split}
\label{t.5}
\eeq
Plugging the Dirichlet-Murphy integral representation of the Legendre polynomials \cite[Ch. III, \S 5.4]{Vilenkin}:
\beq
P_n(\cos u)=-\frac{\rmi}{\pi\sqrt{2}}\int_u^{(2\pi-u)}
\frac{e^{\,\rmi(n+\ud)t}}{\sqrt{\cos u - \cos t}}\,\rmd t,
\label{t.6}
\eeq
into \eqref{t.5}, we obtain:
\beq
\frac{2\sqrt{2}\pi\rmi (-1)^n}{\sqrt{2n+1}} \, c_n
=\int_0^\pi \rmd u\,f\left(\sin^2\left(\frac{u}{2}\right)\right)\sin u\,
\int_u^{(2\pi-u)}\frac{e^{\,\rmi(n+\ud)t}}
{\sqrt{\cos u - \cos t}}\,\rmd t.
\label{t.7}
\eeq
Interchanging the order of integration in \eqref{t.7} we have:
\beq
\begin{split}
\frac{2\sqrt{2}\pi\rmi (-1)^n}{\sqrt{2n+1}} \, c_n
&=\int_0^\pi \rmd t \ e^{\,\rmi(n+\ud)t}\int_0^t f\left(\sin^2\left(\frac{u}{2}\right)\right)\,
\frac{\sin u}{\sqrt{\cos u - \cos t}}\,\rmd u \\
&\quad +\int_\pi^{2\pi} \rmd t \ e^{\,\rmi(n+\ud)t} 
\int_0^{(2\pi-t)} f\left(\sin^2\left(\frac{u}{2}\right)\right) \ \frac{\sin u}{\sqrt{\cos u - \cos t}}\, \rmd u.
\end{split}
\label{t.8}
\eeq
Next, if we make the change of variables: $t\to t-2\pi$ and $u \to -u$,
the second integral on the r.h.s. of \eqref{t.8} becomes: 
\beq
e^{\rmi\pi}\int_{-\pi}^0 \rmd t \ e^{\,\rmi(n+\ud)t}
\int_0^t f\left(\sin^2\left(\frac{u}{2}\right)\right) \ 
\frac{\sin u}{\sqrt{\cos u - \cos t}}\, \rmd u.
\label{t.9}
\eeq
Finally, we obtain:
\beq
\begin{split}
\frac{2\sqrt{2}\pi\rmi (-1)^n}{\sqrt{2n+1}} \, c_n
&=\int_0^\pi \rmd t \ e^{\,\rmi(n+\ud)t}
\int_0^t f\left(\sin^2\left(\frac{u}{2}\right)\right) \ \frac{\sin u}{\sqrt{\cos u - \cos t}}\, \rmd u \\
& \quad +e^{\rmi\pi}\int_{-\pi}^0 \rmd t \ e^{\,\rmi(n+\ud)t}
\int_0^t f\left(\sin^2\left(\frac{u}{2}\right)\right) \ \frac{\sin u}{\sqrt{\cos u - \cos t}}\, \rmd u \\
&=\int_{-\pi}^\pi \!\!\rmd t \ e^{\rmi nt} \, \left[\sgn(t)\,e^{\rmi t/2} \!\! 
\int_0^t f\left(\sin^2\left(\frac{u}{2}\right)\right) \, \frac{\sin u}{\sqrt{\cos u - \cos t}}\, \rmd u\right].
\end{split}
\label{t.10}
\eeq
Let $\hat{\gamma}_n$ denote the auxiliary Legendre coefficients 
$\hat{\gamma}_n \doteq (-1)^n\,c_n/\sqrt{2n+1}$;
then, from \eqref{t.10} we have:
\beq
\hat{\gamma}_n
=\int_{-\pi}^\pi \eta(t) \,e^{\rmi nt}\,\rmd t \qquad (n\geqslant 0),
\label{t.11}
\eeq
where $\eta(t)$ is the $2\pi$-periodization on the real line of the function defined 
on the interval $t\in[-\pi,\pi)$ by:
\beq
\begin{split}
\eta(t) =
\frac{\sgn(t)e^{\rmi\frac{t}{2}}}{2\pi\rmi}
\int_0^{\sin^2(\frac{t}{2})}\!\!\!\frac{f(y)\,\rmd y}{\sqrt{\sin^2(\frac{t}{2})-y}}
=\frac{\sgn(t)e^{\rmi\frac{t}{2}}}{2\pi\rmi} \ g\left(\sin^2\left(\frac{t}{2}\right)\right),
\end{split}
\label{t.12}
\eeq
$g(\cdot)$ being the Abel transform of $f(\cdot)$ given in \eqref{1.1}.
\end{proof}

\noindent
It is easy to check from \eqref{t.4} that $\eta(t)$ enjoys the following symmetry:
\beq
\eta(t) = -e^{\rmi t} \eta(-t),
\label{t.14}
\eeq 
which, in view of \eqref{t.3}, induces the following symmetry on the auxiliary Legendre coefficients $\hat{\gamma}_n$:
\beq
\hat{\gamma}_n = -\hat{\gamma}_{-n-1} \qquad (n \in\Z).
\label{t.15}
\eeq

\section{Stability estimates in the presence of noise and regularization}
\label{se:regularization}

In actual problems, there is always some inherent noise affecting the data, e.g., measurement error or (at least)
roundoff numerical error. Therefore, in practice we have to deal with a noisy realization of the data 
$g^{(\varepsilon)} = g + \varepsilon  \, \nu$ (assuming an additive model of noise \cite{Eggermont}), 
where $\nu$ represents a noise function (specified later) and $\varepsilon>0$ is a (small) parameter indicating 
the level of noise. Therefore, instead of expansion \eqref{t.2} we have to handle the following expansion:
\beq
\sum_{n=0}^\infty c_n^{(\varepsilon)} \, \oP_n(x),
\label{r.1}
\eeq
where the noisy Legendre coefficients $c_n^{(\varepsilon)}$ are associated with the noisy data function 
$g^{(\varepsilon)}$ through the auxiliary Fourier coefficients $\hat{\gamma}_n^{(\varepsilon)}$, 
that is (see \eqref{t.3}):
\beq
c_n^{(\varepsilon)} = (-1)^n\,\sqrt{2n+1} \ \hat{\gamma}_n^{(\varepsilon)}
\qquad\qquad (n \geqslant 0),
\label{r.2}
\eeq
with
\beq
\hat{\gamma}_n^{(\varepsilon)}=\int_{-\pi}^\pi \eta^{(\varepsilon)}(t)\,e^{\rmi n t}\,\rmd t 
\qquad\qquad (n \geqslant 0),
\label{r.2.2}
\eeq
and
\beq
\eta^{(\varepsilon)}(t) = \frac{\sgn(t)}{2\pi\rmi}\,e^{\rmi t/2} \, g^{(\varepsilon)}\left(\sin^2\frac{t}{2}\right).
\label{r.3}
\eeq
However, as a manifestation of the ill-posed nature of the inverse Abel transformation \cite{Gorenflo1}, 
expansion \eqref{r.1} does not necessarily converge. Moreover, even if two data functions $g_1$ and $g_2$ do belong
to the range of $A$ and their \emph{distance} in the data space (here $L^2(E)$) is small, nevertheless the
distance between $A^{-1}g_1$ and $A^{-1}g_2$ in the solution space can be arbitrarily large in view of the fact
that the inverse of the compact operator $A$ is not bounded. Therefore we are forced to employ methods 
of regularization. The literature on this topic is very extensive (see, e.g., \cite{DeMicheli1,Engl} 
and the references quoted therein). In this paper we limit ourselves to consider only one of the possible 
approaches to regularization, precisely, the procedure which consists in truncating suitably expansion \eqref{r.1}, 
that is, stopping the summation at certain finite value of $n$ (see, e.g., \cite{DeMicheli2} for a discussion 
of this method).
Therefore, we consider the approximation to the solution $f$ given by:
\beq
f_N^{(\varepsilon)}(x) \doteq \sum_{n=0}^N c_n^{(\varepsilon)} \, \oP_n(x) \qquad\qquad (x\in E),
\label{r.4}
\eeq
where $N=N(\varepsilon;f)$ plays the role of regularization parameter. It is clear that the 
\emph{optimal} truncation index $N$ depends upon the noise level $\varepsilon$ and the smoothness of the 
unknown solution $f$ and, in order to be used in practice, approximation \eqref{r.4} 
must be accompanied by a suitable \emph{a-posteriori} procedure that computes the optimal value of $N$ from the data. 

\subsection{Stability estimates}
\label{subse:stability}

We now consider the problem of the stability of reconstruction \eqref{r.4}. Our goal is 
to estimate how good is approximation \eqref{r.4} in terms of $N$, $\varepsilon$ and smoothness properties 
of $f$. The norm of the error between the (unknown) solution $f$ and the approximant $f_N^{(\varepsilon)}$ 
can be easily bounded by using the triangular inequality:
\beq
\|f-f_N^{(\varepsilon)}\|_{L^2(E)} \leqslant \|f-f_N\|_{L^2(E)} + \|f_N-f_N^{(\varepsilon)}\|_{L^2(E)},
\label{r.5}
\eeq
where
\beq
f_N(x) \doteq \sum_{n=0}^N c_n \, \oP_n(x) \qquad (x\in E),
\label{r.6}
\eeq
represents the approximation to $f$ constructed with noiseless (but unknown) data $g$ (see \eqref{t.2}). 
The first term on the r.h.s. of \eqref{r.5} represents the \emph{approximation error}, 
whereas the second term is the \emph{noise-propagation error}.

Let us introduce on the interval $E$ of the real line the following measure $\mu$:
\beq
\rmd\mu(x)=\frac{\rmd x}{\sqrt{x(1-x)}}	\qquad (0 < x < 1),
\label{r.18.a}
\eeq
where $\rmd x$ is the Lebesgue measure. Let $L^p_\mu(E)$ ($1\leqslant p<\infty$) denote the following
weighted Lebesgue-space:
\beq
L^p_\mu(E) \doteq \left\{f:E\to\R \mathrm{~measurable}
:\,\|f\|_{L^p_\mu(E)}\doteq\left[\int_0^1 |f(x)|^p\,\rmd\mu(x)\right]^\frac{1}{p} \!< \infty\!\right\}.
\label{r.18.L}
\eeq
Smoothness of functions will be measured by the maximum order $k$ of (weak) derivatives with finite norm
in the interval of interest.
Therefore, in accord with \eqref{r.18.L}, we can define for integers $k\geqslant 0$ the following Sobolev space:
\beq
H^k_\mu(E) \doteq \left\{f \in L^2_\mu(E)\,:\,D_x^{j} f(x)
\mathrm{~exists ~ and ~is ~in~} L^2_\mu(E) \mathrm{~for~all~} 0\leqslant j\leqslant k\right\},
\label{r.18.s}
\eeq
where, for convenience, we adopt the notation $D_x^j \equiv d^j/dx^j$. 
This space is equipped with the norm
\beq
\left\|f\right\|_{H^k_\mu(E)} \doteq 
\sum_{j=0}^k \left\|D^{j} f\right\|_{L^2_\mu(E)}. 
\label{r.18.ns}
\eeq
Regarding the \emph{noise-propagation error} we can prove the following proposition.
\begin{proposition}
\label{pro:1}

If the noise on the data is such that:
\beq
\left\|g - g^{(\varepsilon)}\right\|_{L^{2}_\mu(E)} \leqslant\varepsilon \qquad (\varepsilon > 0),
\label{r.6.1}
\eeq
then
\beq
\left\|f_N-f_N^{(\varepsilon)}\right\|_{L^2(E)} \leqslant \frac{(N+1)}{\sqrt{\pi}}\,\varepsilon.
\label{r.6.2}
\eeq
\end{proposition}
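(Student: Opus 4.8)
The plan is to exploit the orthonormality of the shifted Legendre basis $\{\oP_n\}$ in $L^2(E)$: since $f_N$ and $f_N^{(\varepsilon)}$ are finite expansions over this basis, Parseval's identity reduces the noise-propagation error to a finite sum over squared coefficient differences,
\[
\left\|f_N - f_N^{(\varepsilon)}\right\|_{L^2(E)}^2 = \sum_{n=0}^N \left|c_n - c_n^{(\varepsilon)}\right|^2.
\]
Using $c_n = (-1)^n\sqrt{2n+1}\,\hat{\gamma}_n$ from \eqref{t.3} together with \eqref{r.2}, each term factorizes as $|c_n - c_n^{(\varepsilon)}|^2 = (2n+1)\,|\hat{\gamma}_n - \hat{\gamma}_n^{(\varepsilon)}|^2$, so the whole estimate hinges on a uniform bound for the Fourier-coefficient error $|\hat{\gamma}_n - \hat{\gamma}_n^{(\varepsilon)}|$.

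First I would subtract \eqref{r.2.2} from \eqref{t.3} and insert the explicit forms \eqref{t.4} and \eqref{r.3} of $\eta$ and $\eta^{(\varepsilon)}$ to obtain
\[
\hat{\gamma}_n - \hat{\gamma}_n^{(\varepsilon)} = \frac{1}{2\pi\rmi}\int_{-\pi}^\pi \sgn(t)\,e^{\rmi(n+\ud)t}\,\bigl[g - g^{(\varepsilon)}\bigr]\!\left(\sin^2\tfrac{t}{2}\right)\rmd t.
\]
Passing to absolute values under the integral sign (here I deliberately discard the oscillation of $e^{\rmi(n+\ud)t}$, which costs only a slightly loose constant) and using that the modulus of the integrand is even in $t$, this yields
\[
\left|\hat{\gamma}_n - \hat{\gamma}_n^{(\varepsilon)}\right| \leqslant \frac{1}{\pi}\int_0^\pi \bigl|(g - g^{(\varepsilon)})(\sin^2\tfrac{t}{2})\bigr|\,\rmd t .
\]

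The key step is the change of variables $x = \sin^2(t/2)$ on $[0,\pi]$: a direct computation gives $\rmd x = \ud\sin t\,\rmd t$ and $\sqrt{x(1-x)} = \ud\sin t$ for $t\in(0,\pi)$, so that $\rmd x/\sqrt{x(1-x)} = \rmd t$ coincides exactly with the measure $\rmd\mu$ of \eqref{r.18.a}. This is precisely why the hypothesis \eqref{r.6.1} is phrased in the weighted norm $L^2_\mu(E)$. After this substitution, an application of the Cauchy--Schwarz inequality on $[0,\pi]$ bounds the right-hand side by $\tfrac{1}{\pi}\sqrt{\pi}\,\|g - g^{(\varepsilon)}\|_{L^2_\mu(E)} \leqslant \varepsilon/\sqrt{\pi}$, uniformly in $n$.

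Finally I would assemble the pieces: inserting $|\hat{\gamma}_n - \hat{\gamma}_n^{(\varepsilon)}|^2 \leqslant \varepsilon^2/\pi$ gives $|c_n - c_n^{(\varepsilon)}|^2 \leqslant (2n+1)\,\varepsilon^2/\pi$, and summing the arithmetic series $\sum_{n=0}^N (2n+1) = (N+1)^2$ produces $\|f_N - f_N^{(\varepsilon)}\|_{L^2(E)}^2 \leqslant (N+1)^2\varepsilon^2/\pi$; taking square roots yields \eqref{r.6.2}. The only genuinely delicate point is the change of variables identifying $\rmd\mu$ with Lebesgue measure on $[0,\pi]$, which is what makes the weighted hypothesis the natural one; all remaining steps (Parseval, Cauchy--Schwarz, the summation) are routine.
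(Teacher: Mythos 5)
Your argument is correct and follows essentially the same route as the paper: Parseval over the shifted Legendre basis, the bound $|\hat{\gamma}_n-\hat{\gamma}_n^{(\varepsilon)}|\leqslant\frac{1}{\pi}\int_0^\pi|(g-g^{(\varepsilon)})(\sin^2\frac{t}{2})|\,\rmd t$, the substitution $x=\sin^2(t/2)$ turning Lebesgue measure on $[0,\pi]$ into $\rmd\mu$, Cauchy--Schwarz, and the arithmetic series $\sum_{n=0}^N(2n+1)=(N+1)^2$. The only cosmetic difference is that you estimate the $\hat{\gamma}_n$ differences and reinstate the factor $\sqrt{2n+1}$ afterwards, whereas the paper carries the $c_n$ differences throughout; the constants and all intermediate bounds agree.
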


\begin{proof}
From \eqref{r.4}, \eqref{r.6} and the orthonormality of the basis $\{\oP(x)\}_{n=0}^\infty$ we have:
\beq
\left\|f_N-f_N^{(\varepsilon)}\right\|_{L^2(E)}^2 = \sum_{n=0}^N \left| c_n-c_n^{(\varepsilon)}\right|^2.
\label{r.7.0}
\eeq
From \eqref{t.11}, \eqref{t.12}, \eqref{r.2} and \eqref{r.3}  we obtain:
\beq
\begin{split}
&\! \left|c_n-c_n^{(\varepsilon)}\right| 
= \frac{\sqrt{2n+1}}{2\pi}\left|\int_{-\pi}^\pi
\left[g\left(\sin^2\frac{t}{2}\right)-g^{(\varepsilon)}\left(\sin^2\frac{t}{2}\right)\right]
\sgn(t)\,e^{\rmi (n+\ud)t}\,\rmd t\right| \\
&\leqslant\!\frac{\sqrt{2n+1}}{\pi}\!\!\int_0^\pi 
\!\left| g\left(\sin^2\frac{t}{2}\right)-g^{(\varepsilon)}\left(\sin^2\frac{t}{2}\right)\right|\rmd t
\!=\!\frac{\sqrt{2n+1}}{\pi}\!\!\int_{0}^1 \!\frac{\left|g(x)-g^{(\varepsilon)}(x)\right|}{\sqrt{x(1-x)}}\rmd x,
\end{split}
\nonumber
\label{r.7}
\eeq
which, by using the Cauchy-Schwarz inequality, yields:
\beq
\left|c_n-c_n^{(\varepsilon)}\right|
\leqslant \sqrt{\frac{2n+1}{\pi}} \ \left\|g-g^{(\varepsilon)}\right\|_{L^2_\mu(E)}.
\label{r.8}
\eeq
From \eqref{r.6.1}, \eqref{r.7.0} and \eqref{r.8} we finally have:
\beq
\left\|f_N-f_N^{(\varepsilon)}\right\|_{L^2(E)} 
\leqslant \frac{\varepsilon}{\sqrt{\pi}}\left(\sum_{n=0}^N (2n+1)\right)^\ud
=\frac{(N+1)}{\sqrt{\pi}}\ \varepsilon.
\label{r.9}
\eeq
\end{proof}

\begin{remark}
By using the H\"older inequality it is easy to show that:
$\|g\|_{L^2_\mu(E)} \leqslant A_p \, \|g\|_{L^p(E)}$ for all $p>4$,
$A_p$ being a constant depending only on $p$.
In particular, if $\|g - g^{(\varepsilon)}\|_{L^\infty(E)} \leqslant\varepsilon$, i.e., in the case of 
uniformly bounded noise, from \eqref{r.7.0} and \eqref{r.8} it follows:
$\|f_N-f_N^{(\varepsilon)}\|_{L^2(E)}\leqslant (N+1)\,\varepsilon$.
\end{remark}

Concerning the \emph{approximation error} we have:
\beq
\|f-f_N\|_{L^2(E)}^2 = \left\|\sum_{n=N+1}^\infty c_n \oP_n(x)\right\|_{L^2(E)}^2
=\sum_{n=N+1}^\infty |c_n|^2=\sum_{n=N+1}^\infty (2n+1)|\hat{\gamma}_n|^2,
\label{r.10}
\eeq
$\hat{\gamma}_n$ being the Fourier coefficients of the $2\pi$-periodic function 
$\eta(t)$ ($t\in\R$) (see \eqref{t.3}).
It is well-known that the asymptotic behavior, for large $n$, of the coefficients $\hat{\gamma}_n$ is
related to the smoothness of the periodic function $\eta(t)$ which, in turn, here depends on the smoothness 
of the data function $g(t)$ (see \eqref{t.4}).
In this context, it seems therefore natural to associate the \emph{approximation error} \eqref{r.10} 
with functions defined on the circle $S^1$ which belong to the Sobolev space
\beq
H^k(S^1) \doteq \left\{u \in L^2(S^1) \,:\, 
\|u\|_{H^k(S^1)} \doteq \left[\sum_{n\in\Z} (1+n^2)^k\,|\hat{u}_n|^2\right]^\ud \!< \infty \right\} 
\quad \! (k \geqslant 0),
\label{r.10.a}
\eeq
where $\hat{u}_n$ ($n\in\Z$) denotes the Fourier coefficients of the function $u$. 
In parallel, we can define an equivalent norm in $H^k(S^1)$ in terms of derivatives 
of $u$, i.e.:
\beq
\|u\|'_{H^k(S^1)} \doteq \left(\sum_{j=0}^k \left\|D^j u\right\|_{L^2(\Omega)}^2\right)^\ud.
\label{r.10.b}
\eeq
The equivalence between the norms $\|u\|_{H^k(S^1)}$ and $\|u\|'_{H^k(S^1)}$ 
means that there exist two constants, $q_1(k)$ and $q_2(k)$, such that
\beq
q_1\,\|u\|'_{H^k(S^1)} \leqslant  \|u\|_{H^k(S^1)} \leqslant q_2\,\|u\|'_{H^k(S^1)}.
\label{r.10.c}
\eeq
In particular, from definitions \eqref{r.10.a} and \eqref{r.10.b} we have: 
$q_2(k) = 2\pi\binom{k}{\lfloor k/2\rfloor}$.
Now, if $\eta(t) \in H^k(S^1)$, then its Fourier coefficients $\hat{\gamma}_n$ satisfy 
the following bound, which follows immediately from \eqref{r.10.a}:
$|\hat{\gamma}_n| \leqslant \frac{\|\eta\|_{H^k(S^1)}}{(1+n^2)^{k/2}}$.
Therefore, from \eqref{r.10} we have:
\beq
\|f-f_N\|_{L^2(E)}^2
\leqslant \|\eta\|_{H^k(S^1)}^2\sum_{n=N+1}^\infty \frac{2n+1}{(1+n^2)^k}
\leqslant 3\,\|\eta\|_{H^k(S^1)}^2\sum_{n=N+1}^\infty \frac{1}{n^{2k-1}},
\label{r.13}
\eeq
the latter sum being convergent for $k>1$. Now, from the estimate for the \emph{tail} of a convergent series 
by the integral test we have:
$\sum_{n=(N+1)}^\infty n^{1-2k}\leqslant\int_N^\infty x^{1-2k}\rmd x = [2(k-1) N^{2k-2}]^{-1}$, 
which finally yields:
\beq
\|f-f_N\|_{L^2(E)}\leqslant q_3(k) \, \frac{\|\eta\|_{H^k(S^1)}}{N^{k-1}}
\qquad \left(\eta(t)\in H^k(S^1); k > 1\right),
\label{r.14}
\eeq
where $q_3(k)=\sqrt{\frac{3}{2(k-1)}}$.
Now, we have to relate the smoothness properties of the $2\pi$-periodic function $\eta(t)$ ($t\in\R$) 
to those of the data function $g(x)$ ($x\in E$).
We begin by proving the following lemma.
\begin{lemma}
\label{lem:2}

If $g\in H^k_\mu(E)$ and satisfies the following relations:
\beq
\lim_{t\to 0} \, \left[D_t^{j} g\left(\txs\sin^2\frac{t}{2}\right)\right] = 0 \qquad (j=0,1,\ldots,k-1),
\label{rr.1}
\eeq
then:
\begin{itemize} 
\item[(i)] the $j$-th (weak) derivative $D_t^{j}\eta(t)$ of $\eta(t)$ is the 
$2\pi$-periodization of the function defined on $\Omega$ by:
\beq
D_t^{j}\eta(t) = \frac{\sgn(t)}{2\pi\rmi} \
D_t^{j}\left[e^{\rmi t/2}g\left(\sin^2\txs\frac{t}{2}\right)\right] 
\qquad (j = 0, 1, \ldots,k; \, t\in\Omega);
\label{r.15}
\eeq
\item[(ii)] $\eta\in H^k(S^1)$;
$\left\|\eta\right\|'_{H^k(S^1)} \leqslant q_4(k) \, \left\|g\right\|_{H^k_\mu(E)}$
with $q_4(k)=\sqrt{2(k+1)}k!^2/\pi$.
\end{itemize}
\end{lemma}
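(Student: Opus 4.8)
The plan is to establish (i) by identifying the weak derivatives of $\eta$ one order at a time, and then to read off (ii) from a change of variables combined with the chain rule. The pivotal observation, which I would record at the outset, is that the substitution $x=\sin^2(t/2)$ turns the weight in \eqref{r.18.a} into Lebesgue measure: since $\rmd x=\uds\sin t\,\rmd t$ and $\sqrt{x(1-x)}=\uds|\sin t|$, one has $\rmd\mu(x)=\rmd t$ on $(0,\pi)$, whence for every $0\leqslant i\leqslant k$
\beq
\left\|D_x^{i} g\right\|_{L^2_\mu(E)}^2
= \int_0^\pi \left|D_x^{i} g\!\left(\txs\sin^2\frac t2\right)\right|^2\,\rmd t .
\label{plan.iso}
\eeq
I would also isolate two structural facts that drive the boundary analysis. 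Writing $\psi(t)\doteq g(\sin^2(t/2))$, the map $t\mapsto\sin^2(t/2)$ is even about both $t=0$ and $t=\pi$, so $\psi$ is even about those points; in particular $D_t^{l}\psi$ vanishes at $t=\pi$ for every odd $l$. Moreover, all derivatives of $x(t)=(1-\cos t)/2$ are bounded by $\uds$ in modulus on $\Omega$.

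For part (i) I would argue by induction on $j$, using the elementary fact that a function on $S^1$ which is continuous and piecewise $C^1$ has its classical (a.e.) derivative as its weak derivative. Set $w_j(t)\doteq\frac{\sgn(t)}{2\pi\rmi}\,D_t^{j}[e^{\rmi t/2}\psi(t)]$, so that $w_0=\eta$ and, on $\Omega\setminus\{0\}$, $w_j$ is the classical derivative of $w_{j-1}$. It then suffices to check that $w_{j-1}$ has no jump at the two corners $t=0$ and $t=\pi$ for each $1\leqslant j\leqslant k$. At $t=0$ the jump is proportional to $D_t^{j-1}[e^{\rmi t/2}\psi(t)]\big|_{t=0}$, which by the Leibniz rule is a combination of the values $D_t^{l}\psi(0)$ with $l\leqslant j-1\leqslant k-1$; these all vanish by hypothesis \eqref{rr.1}, so $w_{j-1}$ is continuous (indeed zero) there. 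At $t=\pi$ the comparison is between $w_{j-1}(\pi^-)$ and the value $w_{j-1}(-\pi^+)$ reached around the circle; expanding $D_t^{j-1}[e^{\rmi t/2}\psi(t)]$ by Leibniz and using that $D_t^{l}\psi(\pi)=0$ for odd $l$ while $e^{\pm\rmi\pi/2}=\pm\rmi$, the two values of $D_t^{j-1}[e^{\rmi t/2}\psi(t)]$ at $t=\pi$ and $t=-\pi$ turn out to be negatives of each other, so the opposite signs taken by $\sgn$ make $w_{j-1}$ continuous across the identification $\pi\sim-\pi$. Hence each $w_{j-1}$ $(1\leqslant j\leqslant k)$ is continuous and piecewise smooth on $S^1$, the induction closes, and \eqref{r.15} follows.

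For part (ii) I would start from \eqref{r.15}, which gives $\|D_t^{j}\eta\|_{L^2(\Omega)}^2=\frac{1}{4\pi^2}\int_{-\pi}^{\pi}|D_t^{j}[e^{\rmi t/2}\psi(t)]|^2\,\rmd t$. Stripping off the phase $e^{\rmi t/2}$ by the Leibniz rule (each differentiation of which contributes only a factor $\tfrac{\rmi}{2}$), and then applying the Fa\`a di Bruno formula to write $D_t^{l}\psi(t)=\sum_{i=1}^{l}D_x^{i}g(\sin^2\frac t2)\,B_{l,i}(t)$, where the $B_{l,i}$ are the Bell polynomials in the derivatives of $x(t)$ and are therefore bounded on $\Omega$, I would bound $|D_t^{j}[e^{\rmi t/2}\psi(t)]|$ pointwise by a finite combination of the quantities $|D_x^{i}g(\sin^2\frac t2)|$, $0\leqslant i\leqslant j$. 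Integrating, using the even/odd symmetry of $\psi$ to replace $\int_{-\pi}^{\pi}$ by $2\int_0^{\pi}$, and invoking the isometry \eqref{plan.iso} converts each integral into $\|D_x^{i}g\|_{L^2_\mu(E)}^2\leqslant\|g\|_{H^k_\mu(E)}^2$. Summing the resulting estimates over $j=0,\dots,k$ in accordance with \eqref{r.10.b} yields $\|\eta\|'_{H^k(S^1)}\leqslant q_4(k)\,\|g\|_{H^k_\mu(E)}$, and tracking the binomial and Bell-polynomial constants together with the factors $\tfrac{1}{4\pi^2}$ and $2$ produces the explicit value $q_4(k)=\sqrt{2(k+1)}\,k!^2/\pi$.

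I expect the genuinely delicate point to be the corner analysis at $t=\pi$ in part (i): unlike the jump at $t=0$, it is \emph{not} controlled by the hypotheses \eqref{rr.1} but must be seen to cancel automatically, and isolating the precise reason — the evenness of $g(\sin^2(t/2))$ about $t=\pi$ conspiring with the half-integer phase $e^{\rmi t/2}$, so that the pointwise limits (or, done honestly, the boundary terms in an integration by parts) cancel despite the limited regularity afforded by $H^k_\mu(E)$ — is where the argument must be carried out with care. By contrast, part (ii) is essentially bookkeeping once \eqref{plan.iso} and \eqref{r.15} are in hand, the only mild attention being needed to keep the combinatorial constants sharp enough to reach the stated $q_4(k)$.
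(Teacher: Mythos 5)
Your proposal is correct and follows essentially the same route as the paper: part (i) rests on exactly the two cancellations the paper uses (the jump at $t=0$ killed by hypothesis \eqref{rr.1}, and the mismatch at $t=\pm\pi$ killed by the antiperiodicity $D_t^{j}h(\pi)=-D_t^{j}h(-\pi)$ of $h(t)=e^{\rmi t/2}g(\sin^2\frac{t}{2})/(2\pi\rmi)$, which you derive via the evenness of $g(\sin^2\frac t2)$ and the half-integer phase), the only difference being that you organize this as an induction on the order of differentiation while the paper performs a single $k$-fold integration by parts against periodic test functions. Part (ii) coincides with the paper's argument step for step: Leibniz to strip the phase, Fa\`a di Bruno with Bell polynomials bounded via $|D_t^n\sin^2\frac t2|\leqslant\uds$, and the change of variables sending $\rmd t$ on $(0,\pi)$ to $\rmd\mu$ on $E$.
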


\begin{proof}
Let $h(t) \doteq e^{\rmi t/2}g(\sin^2(t/2))/(2\pi\rmi)$ so that $\eta(t)=\sgn(t)\,h(t)$. 
From the definition of weak derivative \cite{Adams},
for any test function $\phi\in C^\infty_0(\Omega)$ successive integrations by parts give:
\beq
\begin{split}
&\int_{-\pi}^\pi\eta(t) \, D_t^{k}\phi(t)\,\rmd t 
=-\int_{-\pi}^0 h(t)\,D_t^{k}\phi(t)\,\rmd t+\int_0^{\pi} h(t) \, D_t^{k}\phi(t)\,\rmd t \\
&\quad=-\sum_{j=0}^{k-1}\!\left[(-1)^j \, D_t^{j} h(t) \ D_t^{(k-j-1)}\phi(t)\right]_{t=-\pi}^0
\!+\!(-1)^{k+1}\!\!\int_{-\pi}^0 \!\! D_t^{k}h \, \phi(t)\,\rmd t \\
&\qquad\,+\sum_{j=0}^{k-1}\!\left[(-1)^j\,D_t^{j} h(t) \, D_t^{(k-j-1)} \phi(t)\right]_{t=0}^{\pi}
+(-1)^k\!\int_0^{\pi}\,D_t^{k}h(t) \, \phi(t)\,\rmd t \\
&\quad=-2\sum_{j=0}^{k-1}(-1)^j \left[D_t^{j}h(t) \, D_t^{(k-j-1)}\phi(t)\right]_{t=0}
\!+(-1)^k\!\!\int_{-\pi}^{\pi}\!\left[\sgn(t)\,D^k_t h(t)\right]\phi(t)\,\rmd t,
\end{split}
\label{r.15.a}
\eeq
where, for $j=0,1,\ldots,k-1$, the periodicity of the test functions: 
$D_t^{j} \phi(-\pi)=D_t^{j}\phi(\pi)$ has been used together with the relation: 
$D_t^{j} h(-\pi)+D_t^{j} h(\pi)=0$, which follows immediately from
the definition of $h(t)$. If $[D_t^{j}\,h(t)]_{t=0}=0$  ($j=0,1,\ldots,k-1$), then from
\eqref{r.15.a} it follows that 
\beq
\int_{-\pi}^\pi\eta(t) \, D_t^{k}\phi(t)\,\rmd t 
=(-1)^k\int_{-\pi}^{\pi}\left[\sgn(t)\,D^k_t h(t)\right]\phi(t)\,\rmd t,
\label{r.15.b}
\eeq
that is, $\left[\sgn(t)\,D^k_t h(t)\right]$ is the $k$-th weak derivative of $\eta(t)$.
It is finally easy to see that the conditions $[D_t^{j}\,h(t)]_{t=0}=0$ ($j=0,1,\ldots,k-1$) are satisfied if 
conditions \eqref{rr.1} hold true. Point (i) is thus proved. \\
For what regards the point (ii), our goal now is to find an upper bound for $\left\|\eta\right\|'_{H^k(S^1)}$ 
(see \eqref{r.10.b}) in terms of the Sobolev norm of $g$. Using \eqref{r.15}, the product rule 
for the derivatives, and the Minkowski inequality, we have:
\beq
\begin{split}
\left\|D^{k} \eta\right\|_{L^2(\Omega)}
&=\left[\int_{-\pi}^\pi\left|D^{k}_t \eta(t)\right|^2\,\rmd t\right]^\ud 
=\left[\int_{-\pi}^\pi\left|\sgn(t)\,D^{k}_t h(t)\right|^2\,\rmd t\right]^\ud \\
&=\frac{1}{2\pi}\left[\int_{-\pi}^\pi
\left|e^{\rmi t/2}\sum_{j=0}^k\binom{k}{j}\left(\frac{\rmi}{2}\right)^{(k-j)}
D^{j}_t g\left(\txs\sin^2\frac{t}{2}\right)\right|^2\rmd t\right]^\ud \\
&\leqslant\sum_{j=0}^k d_{k,j}\left[\int_{-\pi}^\pi
\left|D^{j}_t g\left(\txs\sin^2\frac{t}{2}\right)\right|^2\rmd t\right]^\ud,
\end{split}
\label{r.18}
\eeq
with $d_{k,j}=2^{(j-k-1)}\binom{k}{j}/\pi$.
Let $x(t)=\sin^2(t/2)$; then: $2D_t = \sin t \,D_x$. Now, the Fa\`a di Bruno formula
for the derivatives of the composition $g(x(t))$ reads:
\beq
D_t^j\,g(x(t)) = \sum_{\ell=0}^j D_x^{\ell} \, g(x(t)) \cdot
B_{j,\ell}\left(D_t x(t), D_t^2 x(t), \ldots, D_t^{(j-\ell+1)} x(t)\right),
\label{r.18.n}
\eeq
where $B_{j,\ell}(y_1,y_2,\ldots,y_{j-\ell+1})$ are the (partial) Bell polynomials \cite{Comtet}: 
\beq
B_{j,\ell}\left(y_1,y_2,\ldots,y_{j-\ell+1}\right)
=\sum\frac{j!}{\ell_1!\ell_2!\cdots \ell_{j}!}\,\prod_{i=1}^j\left(\frac{y_i}{i!}\right)^{\ell_i},
\label{r.18.nn}
\eeq
the summation being over all partitions of $j$ into $\ell$ non-negative parts, i.e., over all non-negative
integer solutions $\ell_1,\ell_2,\ldots,\ell_{j}$ of the two equations
\beq
\ell_1+2\ell_2+\ldots+j\ell_{j} = j, \qquad
\ell_1+\ell_2+\ldots+\ell_{k} = \ell.
\label{rr}
\eeq
Note that: $B_{0,0}=1$, $B_{0,\ell}=0$ for $\ell\geqslant 1$.
Now, from \eqref{r.18.n} we have:
\beq
\begin{split}
&\left[\int_{-\pi}^\pi \left|D_t^{j}\, g\left(\txs\sin^2\frac{t}{2}\right)\right|^2 \rmd t\right]^\ud \\
&=\left[\int_{-\pi}^\pi \left|\sum_{\ell=0}^j D_x^{\ell} \, g(x(t)) \cdot 
B_{j,\ell}\left(D_t x(t), D_t^2 x(t), \ldots, D_t^{(j-\ell+1)} x(t)\right)\right|^2\rmd t\right]^\ud \\
&=\left[2\!\!\int_{0}^1 \left|\sum_{\ell=0}^j D_x^{\ell}\, g(x) \!\cdot\! 
\left[B_{j,\ell}\left(D_t x(t),D_t^2 x(t),\ldots,D_t^{(j-\ell+1)} x(t)\right)\right]_{t=t_*}
\right|^2\!\!\rmd\mu(x)\right]^\ud \\
&\leqslant\sum_{\ell=0}^j
\left[2\!\int_{0}^1 \left|D_x^{\ell}\, g(x)\right|^2 \!\cdot\! 
\left|B_{j,\ell}\left(D_t x(t),D_t^2 x(t),\ldots,D_t^{(j-\ell+1)}x(t)\right)
\right|^2_{t=t_*}\!\rmd\mu(x)\right]^\ud\!\!\!,
\end{split}
\label{r.18.b}
\eeq
where $t_*(x)=\cos^{-1}(1-2x)$.
Since $|D_t^{n}\, x(t)|_{t=t_*(x)}=\ud|\cos(t+n\frac{\pi}{2})|_{t=t_*(x)} \leqslant\ud$ 
for $x\in[0,1]$ and $n=1,2,\ldots$, we have for $x\in[0,1], j=0,1,\ldots, \mathrm{\,and\,} 
\,0\leqslant\ell\leqslant j$:
\beq
\begin{split}
&\left|B_{j,\ell}\left(D_t x(t),D_t^2 x(t),\ldots,D_t^{(j-\ell+1)}x(t)\right)
\right|_{t=t_*(x)}\leqslant B_{j,\ell}\txs\left(\ud,\ud,\ldots,\ud\right) \\
&\quad\leqslant\frac{1}{2^\ell}\sum\frac{j!}{\ell_1!\ell_2!\cdots\ell_{j}!}
=\frac{j!}{2^\ell\,\ell!}\binom{j-1}{\ell-1},
\end{split}
\label{r.18.c}
\eeq
where we used $\sum\frac{\ell!}{\ell_1!\ell_2!\cdots\ell_{j}!}=\binom{j-1}{\ell-1}$ under conditions \eqref{rr}.
From \eqref{r.18.b} and \eqref{r.18.c} we thus obtain:
\beq
\left[\!\int_{-\pi}^\pi\!\left|D_t^{j}\, g\left(\txs\sin^2\frac{t}{2}\right)\right|^2 \!\rmd t\right]^\ud
\!\!\leqslant\!\sum_{\ell=0}^j \! b_{j,\ell}
\left[\int_{0}^1 \! \left|D_x^{\ell} \, g(x)\right|^2 \rmd\mu(x)\right]^\ud
\!\!=\sum_{\ell=0}^jb_{j,\ell}\left\|D^{\ell} g\right\|_{L^2_\mu(E)},
\label{r.18.d} \nonumber
\eeq
where $b_{j,\ell}=\frac{j!}{2^{(\ell-1/2)}\,\ell!}\binom{j-1}{\ell-1}$.
The latter inequality and Eq. \eqref{r.18} then give:
\beq
\begin{split}
\left\|D^k\eta\right\|_{L^2(\Omega)} 
&\leqslant\sum_{j=0}^k\sum_{\ell=0}^j d_{k,j}\,b_{j,\ell} \left\|D^\ell g\right\|_{L^2_\mu(E)}
=\sum_{\ell=0}^k\left(\sum_{j=\ell}^k d_{k,j}\,b_{j,\ell}\right) \left\|D^\ell g\right\|_{L^2_\mu(E)} \\
&\quad\leqslant q_k \sum_{\ell=0}^k \left\|D^\ell g\right\|_{L^2_\mu(E)}
=q_k \left\|g\right\|_{H^k_\mu(E)},
\end{split}
\label{r.19}
\eeq
where $q_k=\sqrt{2}k!^2/\pi$. Finally, from \eqref{r.10.b} and \eqref{r.19} we obtain:
\beq
\left\|\eta\right\|'_{H^k(S^1)} \doteq \left(\sum_{j=0}^k \left\|D^j\eta\right\|^2_{L^2(\Omega)}\right)^\ud
\leqslant q_k\left(\sum_{j=0}^k \left\|g\right\|^2_{H^j_\mu(E)}\right)^\ud
\leqslant q_4(k) \, \left\|g\right\|_{H^k_\mu(E)},
\label{r.20}
\eeq
where $q_4(k)=\sqrt{k+1}\,q_k$, and the last inequality following from the embedding relation 
among the Sobolev spaces $H_\mu^j(E)$ for $j=0,\ldots,k$ \cite{Adams}.
\end{proof}
We can now state quantitatively that the smoother the inverse Abel transform 
$f(x)$ is, the faster its approximation $f_N(x)$ converges to $f(x)$.

\begin{proposition}
\label{pro:2}

If $g\in H_\mu^k(E)$ ($k > 1$) and conditions \eqref{rr.1} are satisfied, then the approximation error
can be bounded as follows:
\beq
\|f-f_N\|_{L^2(E)}\leqslant q_5(k)\,\frac{\|g\|_{H^k_\mu(E)}}{N^{k-1}} \qquad (k>1),
\label{r.21}
\eeq
with $q_5(k)=q_2(k)q_3(k)q_4(k)$.
\end{proposition}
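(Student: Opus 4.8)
The plan is to obtain Proposition~\ref{pro:2} as a short chain of three inequalities that have already been established, since all of the genuine analytic content has been absorbed into Lemma~\ref{lem:2} and into the tail estimate leading to \eqref{r.14}. Consequently the proof is essentially an exercise in matching hypotheses and aggregating constants.

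The first thing I would check is that the hypotheses of Proposition~\ref{pro:2} --- namely $g \in H^k_\mu(E)$ with $k>1$ together with the boundary conditions \eqref{rr.1} --- coincide exactly with the hypotheses of Lemma~\ref{lem:2}. This point deserves a moment of care, because the approximation-error bound \eqref{r.14} was derived under the standing assumption $\eta(t) \in H^k(S^1)$, and it is precisely part~(ii) of Lemma~\ref{lem:2} that certifies this membership starting from the assumptions on $g$. Thus invoking the lemma first serves two purposes simultaneously: it licenses the use of \eqref{r.14}, and it supplies the quantitative control $\|\eta\|'_{H^k(S^1)} \leqslant q_4(k)\,\|g\|_{H^k_\mu(E)}$.

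The central step is then the chaining itself. Starting from \eqref{r.14}, I would pass from the Fourier-side norm $\|\eta\|_{H^k(S^1)}$ to the derivative-based norm $\|\eta\|'_{H^k(S^1)}$ by the right-hand inequality in the norm equivalence \eqref{r.10.c}, and then bound the latter by Lemma~\ref{lem:2}(ii), obtaining
\[
\|f-f_N\|_{L^2(E)} \leqslant q_3(k)\,\frac{\|\eta\|_{H^k(S^1)}}{N^{k-1}} \leqslant q_2(k)\,q_3(k)\,\frac{\|\eta\|'_{H^k(S^1)}}{N^{k-1}} \leqslant q_2(k)\,q_3(k)\,q_4(k)\,\frac{\|g\|_{H^k_\mu(E)}}{N^{k-1}}.
\]
Setting $q_5(k)=q_2(k)\,q_3(k)\,q_4(k)$ yields exactly \eqref{r.21}.

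I do not expect any real analytic obstacle at this stage: the ill-posedness and the delicate Fa\`a di Bruno bookkeeping have already been discharged inside Lemma~\ref{lem:2}, and the decay rate $N^{-(k-1)}$ is fixed once and for all by the integral-test tail estimate preceding \eqref{r.14}. The only thing demanding attention is the aggregation of the constants --- confirming that $q_2$, $q_3$, and $q_4$ enter multiplicatively and in the stated order --- and verifying that the preconditions of \eqref{r.14}, of the equivalence \eqref{r.10.c}, and of Lemma~\ref{lem:2} are all in force at once under the single hypothesis $g\in H^k_\mu(E)$ with \eqref{rr.1}. In that sense the proposition is best viewed as the clean corollary that packages the technical estimates into a statement phrased directly in terms of the smoothness of the data~$g$.
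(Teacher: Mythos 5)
Your proposal is correct and follows exactly the paper's own argument: chaining the tail estimate \eqref{r.14}, the norm equivalence \eqref{r.10.c}, and Lemma~\ref{lem:2}(ii) (i.e.\ \eqref{r.20}) to aggregate the constant $q_5(k)=q_2(k)q_3(k)q_4(k)$. Your additional observation that Lemma~\ref{lem:2}(ii) is what certifies $\eta\in H^k(S^1)$ and thereby licenses \eqref{r.14} is a point the paper leaves implicit, but there is nothing further to add.
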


\begin{proof}
Bound \eqref{r.21} follows plugging \eqref{r.20} into \eqref{r.10.c} and then the result into \eqref{r.14}.
\end{proof}

Finally, we can prove in the next theorem how the truncation index $N$ should depend on $\varepsilon$ and on 
the smoothness of the solution $f$ in order to have guaranteed a reconstruction error of the order of the noise 
level.
\begin{theorem}
\label{the:1}

Assume the noise on the data $g$ is such that:
$\|g - g^{(\varepsilon)}\|_{L^{2}_\mu(E)}\leqslant\varepsilon$, $\varepsilon > 0$, and let
$g \in H^k_\mu(E)$. If the truncation index is $N=c\,\varepsilon^{-1/k}$ ($c$=constant) then we have for $k>1$:
\beq
\left\|f-f_N^{(\varepsilon)}\right\|_{L^2(E)} = O\left(\varepsilon^{(k-1)/k}\right) \qquad (k>1).
\label{r.22}
\eeq
\end{theorem}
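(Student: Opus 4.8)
The plan is to combine the two error bounds already in hand---the noise-propagation estimate of Proposition~\ref{pro:1} and the approximation-error estimate of Proposition~\ref{pro:2}---via the triangle inequality \eqref{r.5}, and then to tune the dependence of $N$ on $\varepsilon$ so that the two contributions decay at a common rate.

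First I would apply \eqref{r.5} to split the reconstruction error into the approximation error $\|f-f_N\|_{L^2(E)}$ and the noise-propagation error $\|f_N-f_N^{(\varepsilon)}\|_{L^2(E)}$. Since $g\in H^k_\mu(E)$ with $k>1$ (and the compatibility conditions \eqref{rr.1} in force, as required by Proposition~\ref{pro:2}), the first term is bounded by $q_5(k)\,\|g\|_{H^k_\mu(E)}\,N^{-(k-1)}$. Since the data satisfy $\|g-g^{(\varepsilon)}\|_{L^2_\mu(E)}\leqslant\varepsilon$, Proposition~\ref{pro:1} bounds the second term by $(N+1)\varepsilon/\sqrt{\pi}$. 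Together these give
\beq
\|f-f_N^{(\varepsilon)}\|_{L^2(E)} \leqslant q_5(k)\,\frac{\|g\|_{H^k_\mu(E)}}{N^{k-1}} + \frac{N+1}{\sqrt{\pi}}\,\varepsilon.
\eeq

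The decisive observation is that the two terms on the right compete as $N$ varies: the approximation error shrinks like $N^{-(k-1)}$, whereas the noise error grows linearly in $N$. Balancing them by requiring $N^{-(k-1)}\sim N\varepsilon$, equivalently $N^{-k}\sim\varepsilon$, motivates the prescribed choice $N=c\,\varepsilon^{-1/k}$. Substituting this into the displayed inequality, the approximation term becomes $q_5(k)\,c^{-(k-1)}\,\|g\|_{H^k_\mu(E)}\,\varepsilon^{(k-1)/k}$ and the noise term becomes $(c/\sqrt{\pi})\,\varepsilon^{(k-1)/k}+(1/\sqrt{\pi})\,\varepsilon$; both leading contributions scale as $\varepsilon^{(k-1)/k}$, while the leftover $O(\varepsilon)$ piece coming from the ``$+1$'' is of higher order because $(k-1)/k<1$. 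Collecting constants then yields \eqref{r.22}.

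I expect no genuine obstacle, since Propositions~\ref{pro:1} and \ref{pro:2} carry the analytic weight; the argument is a textbook approximation-versus-noise (bias--variance) trade-off. The only care needed is the exponent bookkeeping---confirming that $N\sim\varepsilon^{-1/k}$ is precisely the balancing choice, that the common rate is exactly $\varepsilon^{(k-1)/k}$, and that the subdominant $\varepsilon$ remainder is absorbed into the leading term for small $\varepsilon$.
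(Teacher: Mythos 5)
Your proposal is correct and follows exactly the paper's own proof: the triangle inequality \eqref{r.5} combined with the bounds of Propositions \ref{pro:1} and \ref{pro:2}, then the substitution $N=c\,\varepsilon^{-1/k}$ to balance the two contributions at the rate $\varepsilon^{(k-1)/k}$. You merely spell out the bookkeeping (and rightly note that conditions \eqref{rr.1} are implicitly assumed) that the paper leaves to the reader.
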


\begin{proof}
Bound \eqref{r.22} follows from \eqref{r.5} and bounds \eqref{r.6.2} and \eqref{r.21}
given in Propositions \ref{pro:1} and \ref{pro:2}, respectively.
\end{proof}

\section{Numerical analysis: Algorithm and examples}
\label{se:numerical}

Formula \eqref{r.4} provides us with a one parameter set of regularized solutions 
$\{f_N^{(\varepsilon)}(x)\}_{N\geqslant 0}$, the integer $N$ playing the role of regularization parameter,
from which we have to select a solution which can be considered in some sense an optimal solution to problem
\eqref{1.1}. The indication given by Theorem \ref{the:1} on the optimal value of the parameter $N$ is important
from the theoretical point of view but it is of little
practical utility since it requires the \emph{a-priori} knowledge of both noise level and smoothness properties
of the solution. Although it is often reasonable to have estimates of the amount of noise corrupting the data,
instead it is usually impossible to check if \emph{a-priori} smoothness assumptions on the 
solution are actually satisfied. It is therefore appropriate to adopt \emph{a-posteriori} strategies,
which can determine a suitable regularization parameter $N$ from the data, without making any assumption 
on the smoothness of the solution. In this context Morozov's Discrepancy Principle seems to be an
appropriate method. Accordingly, the parameter $N$ is chosen in such a way that 
\beq
\left\|A\,f_N^{(\varepsilon)}-g^{(\varepsilon)}\right\|_{L^2(E)} = \tau\,\varepsilon
\qquad (\tau>1),
\label{n.1}
\eeq
which amounts to requiring that \emph{given data} and \emph{reproduced data} coincide within the noise
(for a proof guaranteeing the existence of a solution to \eqref{n.1}, see \cite{Groetsch}).

\skd

The algorithm for computing the solution to problem \eqref{1.1} which emerges from this analysis is very simple and
can be summarized in the following three steps:

\begin{enumerate}
\item[\textbf{1.}] From the data $g^{(\varepsilon)}$ 
compute the Fourier coefficients $\gamma_n^{(\varepsilon)}$ of the function 
$\eta^{(\varepsilon)}(t)$ (see \eqref{r.2.2} and \eqref{r.3}), and then calculate the coefficients 
$c_n^{(\varepsilon)}$ by formula \eqref{r.2}.
\item[\textbf{2.}] According to criterion \eqref{n.1}, fix $\tau>1$ and choose the truncation index $N$ such that
\beq
\left\|A\,f_N^{(\varepsilon)}-g^{(\varepsilon)}\right\|_{L^2(E)} \leqslant \tau\,\varepsilon
<\left\|A\,f_{N+1}^{(\varepsilon)}-g^{(\varepsilon)}\right\|_{L^2(E)}. \nonumber
\label{n.2}
\eeq
\item[\textbf{3.}] Compute the solution $f_N^{(\varepsilon)}(x)$ by formula \eqref{r.4}.
\end{enumerate}

\vskip 0.2 cm

The first step is rapid since it can take full advantage of the computational efficiency of the 
Fast Fourier Transform both in terms of speed of computation and of accuracy \cite{Calvetti}.
The first $N$ Fourier coefficients 
$\gamma_n^{(\varepsilon)}$ can be computed by a single FFT from the samples of $\eta^{(\varepsilon)}(t)$ 
at $N$ distinct points of the interval $[-\pi,\pi]$ in $\mathcal{O}(N \log N)$ time. 
Even the third step can be implemented  efficiently by using fast algorithms for the evaluation 
of Legendre expansions (see, e.g., the $\mathcal{O}(N \log N)$ algorithm proposed in \cite{Alpert}).
The criterion in Step 2 guarantees high accuracy at the expense of computing time performance.
More efficient tests (though, in general, less accurate) which are based on \emph{a-priori} decision
criteria, can alternately be implemented (see, e.g., \cite{DeMicheli1,DeMicheli2}).

\subsection{Numerical examples}
\label{subse:numerical}

The above algorithm for the computation of the inverse Abel transform has been implemented in double precision 
arithmetic using the standard routines of the open source GNU Scientific Library
for the computation of the Fast Fourier Transform and of the Legendre polynomials. 
The performances of the algorithm have been evaluated on numerous test functions, 
some of them (or slight variants of them) have been considered also by other authors
in the literature (see, e.g., the numerical experiments in Refs. 
\cite{Ammari1,Ammari2,Deutsch,Gueron,Minerbo,Pandey}).

\begin{figure}[tb]
\includegraphics[width=\textwidth]{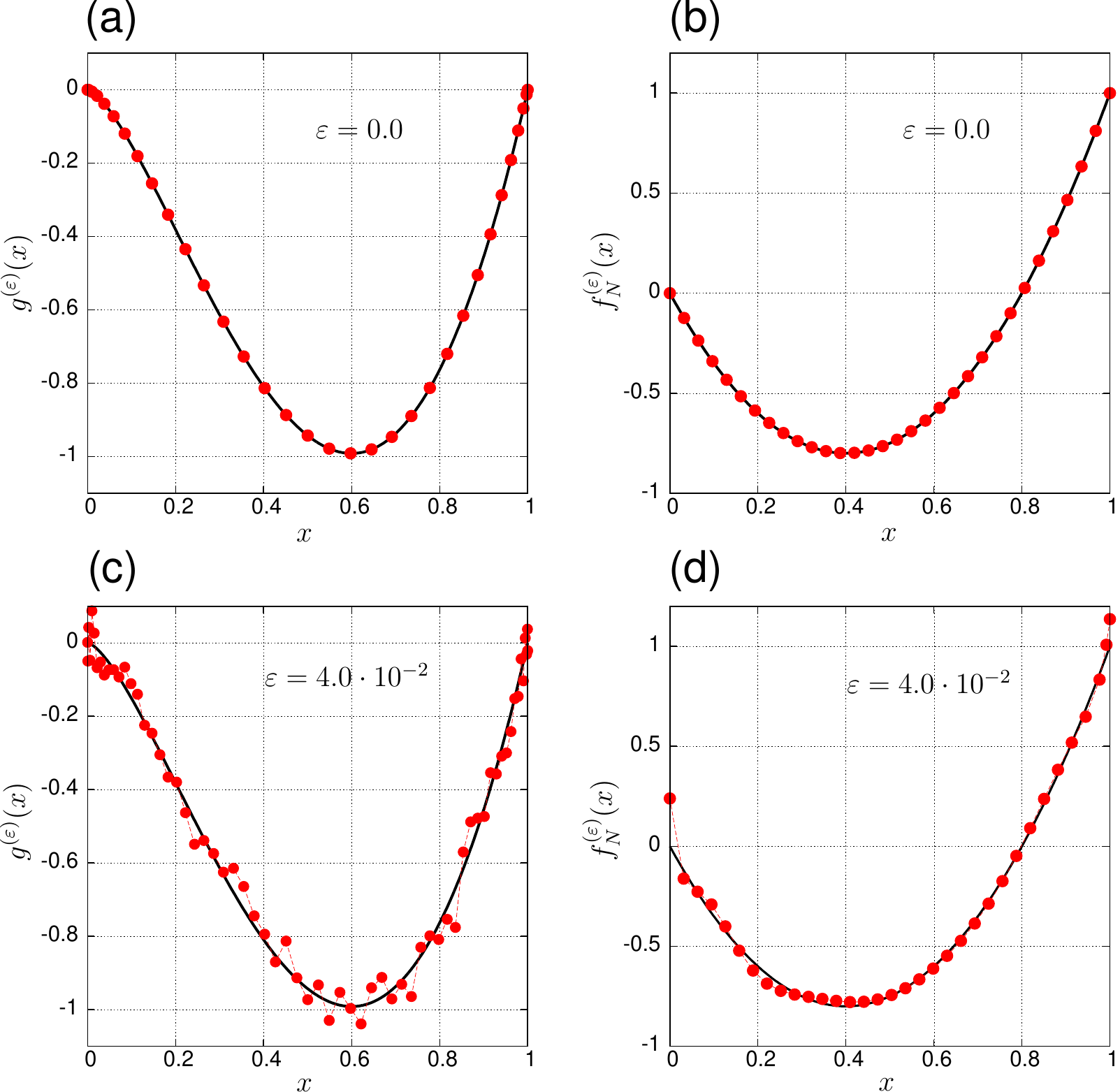} 
\caption{\label{fig:1} \small 
Analysis of the function $f_1(x)$ (see \eqref{ammari0.f}). 
(a): Abel transform $g_1(x)$ (solid line; see Eq. \eqref{ammari0.g}). 
The filled dots represent the input data samples $g_1(\sin^2 (x_j/2))$ 
(see \eqref{t.4} and \eqref{r.3}), $\{x_j\}_{j=1}^{N_s}$ being $N_s$ points of 
a uniform grid on the interval $[0,1]$; $N_s = 64$; 
$\varepsilon = 0.0$, i.e., the input data samples are affected only by roundoff error.
(b): Inverse Abel transform $f_1(x)$ (solid line) of the function $g_1(x)$. 
The dots represent samples of the approximation $f_N^{(\varepsilon)}(x)$ 
(see \eqref{r.4}) computed from the input data shown in (a) with $N=13$, 
which corresponds to the minimum discrepancy \eqref{n.1} (see next Fig. \ref{fig:2}(c)). 
(c): Noisy input data $g_1^{(\varepsilon)}(\sin^2 (x_j/2))$.
The noise level is: $\varepsilon = 4.0\cdot10^{-2}$; 
the corresponding signal-to-noise ratio of the input data is $\mathrm{SNR}=23.1\,\mathrm{dB}$.
(d) Inverse Abel transform $f_N^{(\varepsilon)}(x)$ computed from the data shown in (c) by Eq. \eqref{r.4} with $N = 13$.}
\end{figure}

Figure \ref{fig:1} summarizes the results of recovering the smooth function $f_1(t)$
from its Abel transform $g_1(x)$, where \cite{Ammari1,Ammari2}:
\begin{subequations}
\begin{align}
f_1(x) &= 5x^2-4x, \label{ammari0.f} \\
g_1(x) &= (Af_1)(x) = \frac{16}{3}(x^{5/2}-x^{3/2}). \label{ammari0.g}
\end{align}
\end{subequations}
In panel (b) we see the excellent reconstruction of $f_1(x)$ (filled dots) obtained for the noise level 
$\varepsilon = 0.0$, which amounts to saying that the input data samples $\{g_1(x_j)\}_{j=1}^{N_s}$
were affected by only roundoff error (see the figure legend for numerical details). 
Note that the input samples $g_1(x_j)$ (shown in Fig. \ref{fig:1}(a)) are not equispaced 
since the actual function to be Fourier transformed, and therefore to be sampled on a regular grid, 
is the function $\eta(t)$ on $ t\in[-\pi,\pi)$ (see \eqref{t.4}), which is proportional to $g(\sin^2(\frac{t}{2}))$.
If the actual data are not available on the prescribed grid on the interval $[-\pi,\pi)$, then fast routines
computing the Fourier transform at nonequispaced points can be conveniently used 
(see, e.g., \cite{Greengard}).

Panels (c) and (d) show the analysis in the case of noisy input data (see Fig. \ref{fig:1}(c)).
Noisy data samples have been obtained by adding to the noiseless data $\{g_1(x_j)\}$ random numbers normally 
distributed with variance $\varepsilon^2$. In this example we had $\varepsilon=4.0\cdot10^{-2}$, which corresponds
to a signal-to-noise ratio of the input data (defined as the ratio of the noiseless data power to
the noise power) equal to $\mathrm{SNR} = 23.1\,\mathrm{dB}$. In spite of the rather low $\mathrm{SNR}$,
the accurate reconstruction shown in Fig. \ref{fig:1}(d) exhibits the good stability of the inversion algorithm.
As expected from the theoretical analysis where, in order to estimate the propagation error, 
we have been forced to introduce weighted Lebesgue (and Sobolev) spaces with measure $d\mu$ (see \eqref{r.18.a}),
larger errors appear close to the boundary points, i.e., for $x\sim0$ and $x\sim 1$, nonetheless remaining
rather limited. This behavior is confirmed and made more clear in Fig. \ref{fig:2}(a), where the pointwise error 
$\left|f^{(\varepsilon)}(x)-f^{(\varepsilon)}_N(x)\right|$ is plotted against $x\in[0,1]$ for three
levels of noise. Figure \ref{fig:2}(b) exhibits the (exponentially) rapid descent of the root mean square error
committed in computing the Abel inversion as the SNR of the input data increases 
(or, equivalently, as $\varepsilon$ decreases), showing the stability of the algorithm 
against the noise, argued earlier from the analysis of Fig. \ref{fig:1}. 
It is however worth recalling that in this example $f_1(x)$ is an analytic functions and, 
therefore, its Legendre expansion is expected to converge spectrally fast. This contributes indeed to the very high stability manifested in this case.

\begin{figure}[tb]
\includegraphics[width=\textwidth]{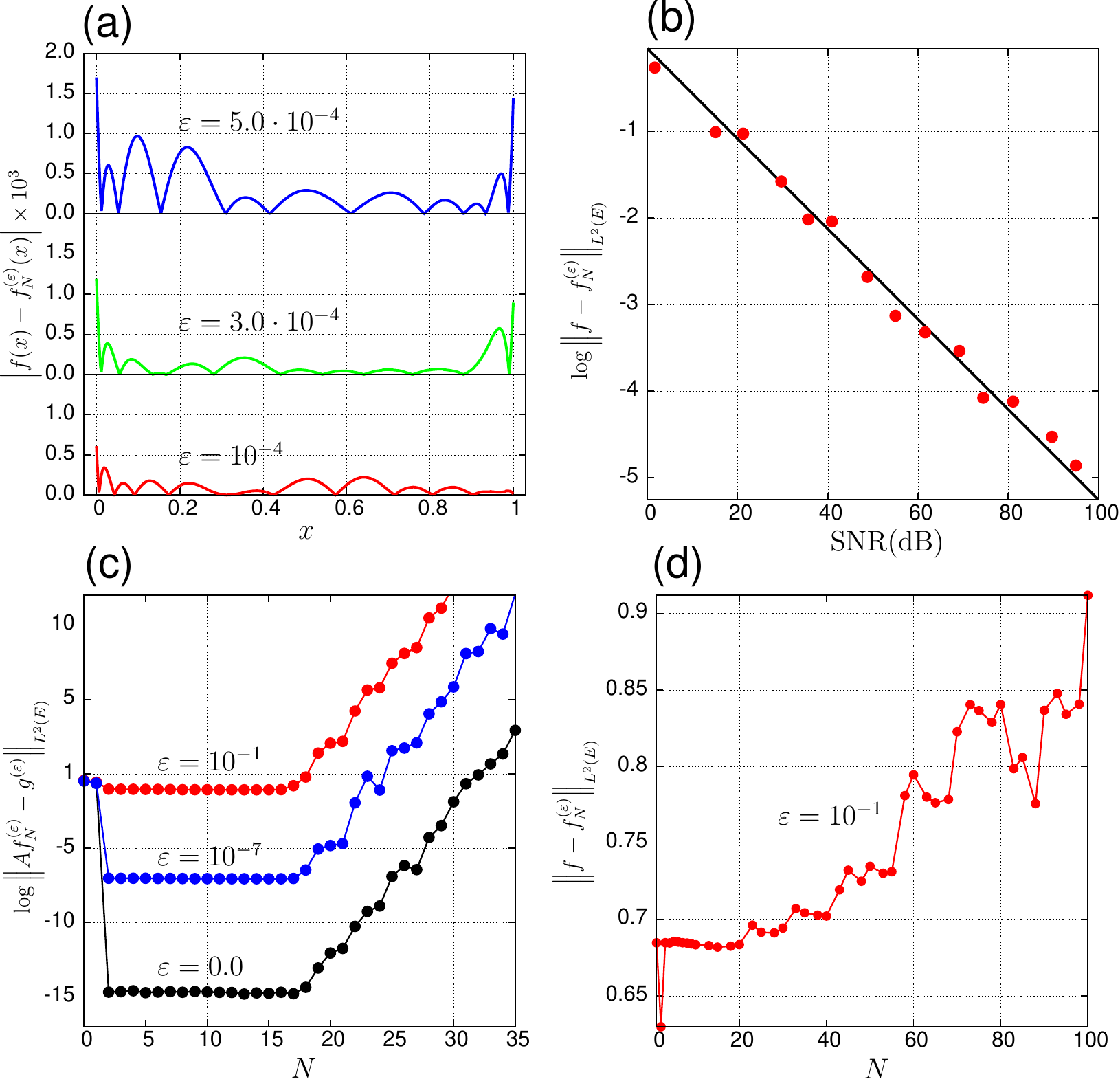} 
\caption{\label{fig:2} \small Function $f_1(x)$ (see \eqref{ammari0.f}): Error analysis.
(a) Pointwise error $|f(x)-f_N^{(\varepsilon)}(x)|$ vs. $x$ (in logarithmic scale) computed 
for three levels of noise $\varepsilon$ on the data $g_1^{(\varepsilon)}(x)$: 
$\varepsilon_1=10^{-4}$ ($\mathrm{SNR}=75\,\mathrm{dB}$),
$\varepsilon_2=3.0\cdot10^{-4}$ ($\mathrm{SNR}=65\,\mathrm{dB}$), 
$\varepsilon_3=5.0\cdot10^{-4}$ ($\mathrm{SNR}=60\,\mathrm{dB}$). The corresponding 
(minimum discrepancy) truncation indexes were: $N_1=15, N_2=14, N_3=14$.
(b) Root mean square error $\|f-f_N^{(\varepsilon)}\|_{L^2(E)}$ 
(in logarithmic scale) against $\mathrm{SNR}$.
The approximating solid line represents: 
$\exp(ax+b)$ with $a=-0.12, b=-0.1$. (c) Examples of discrepancy 
$\|Af_N^{(\varepsilon)}-g^{(\varepsilon)}\|_{L^2(E)}$ against $N$ for various levels of noise: 
$\varepsilon=0.0$, $\varepsilon=10^{-7}$, $\varepsilon=10^{-1}$.
(d) Root mean square error $\|f-f_N^{(\varepsilon)}\|_{L^2(E)}$ against $N$ with 
$\varepsilon=10^{-1}$ ($\mathrm{SNR}=15.2\,\mathrm{dB}$). In this case the approximants $f_N^{(\varepsilon)}(x)$ have been computed by using $N_s=128$ input data samples.}
\end{figure}

Panels (c) and (d) of Fig. \ref{fig:2} give an example of the discrepancy analysis which aims at selecting the 
\emph{optimal} truncation index $N$. The discrepancy function (shown in Fig. \ref{fig:2}(c) for three levels
of noise) exhibits a \emph{plateau} ranging nearly from $N\gtrsim 3$ through $N\lesssim 17$ before starting 
diverging very rapidly for $N > 17$. For values of $N$ belonging to this \emph{plateau}, the Abel inversion 
algorithm is expected to yield a \emph{nearly-optimal} reconstruction of the function $f_1(x)$. 
This is indeed the case, as shown in Fig. \ref{fig:2}(d) where the $L^2$-reconstruction-error on the 
function $f_1(x)$ is plotted against $N$. Parallel to what shown in panel (c), the $L^2$-error remains small 
for $N\lesssim 20$ before starting to increase for larger values of $N$.

\begin{figure}[tb]
\includegraphics[width=\textwidth]{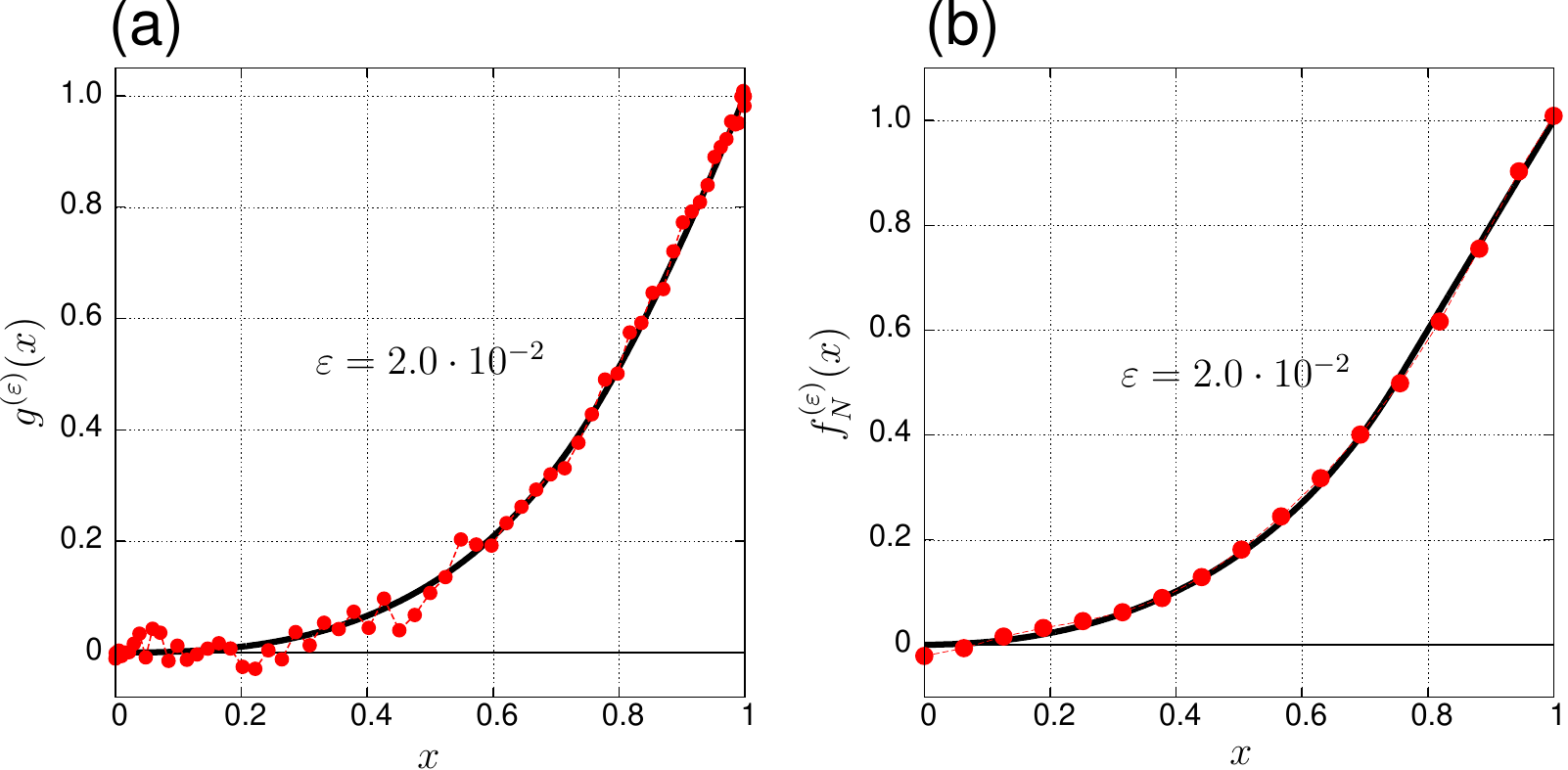} 
\caption{\label{fig:3} \small Analysis of the function $f_2(x)$ (see \eqref{Deutsch.f}).
(a): Abel transform $g_2(x)$ (solid line; see \eqref{Deutsch.g}). 
The filled dots represent the noisy input data samples $g_2^{(\varepsilon)}(\sin^2 (x_j/2))$ (see \eqref{r.3}); $N_s = 64$; $\varepsilon = 2.0\cdot10^{-2}$;
$\mathrm{SNR}=27.5\,\mathrm{dB}$.
(b): Inverse Abel transform $f_2(x)$ (solid line) of the function $g_2(x)$. 
The filled dots represent samples of the approximation $f_N^{(\varepsilon)}(x)$ (see \eqref{r.4}) 
computed from the input data shown in (a) with truncation index $N=15$, which corresponds to the minimum discrepancy \eqref{n.1}.
}
\end{figure}

Figure \ref{fig:3} shows the analysis of the Abel-pair of functions \cite{Deutsch,Gueron}:
\begin{subequations}
\begin{align}
f_2(x) &= 2I_{[0,\frac{2}{3}]}(x)(2-2\sqrt{1-x}-x)+I_{[\frac{2}{3},1]}(x)(2x-1) \label{Deutsch.f}, \\[+3pt]
g_2(x) &=
\begin{cases}
\frac{4}{3}(3-2x)\sqrt{x}-4(1-x)\log\frac{1+\sqrt{x}}{\sqrt{1-x}} & \mathrm{if}~x \in [0,\frac{2}{3}), \\[+5pt]
\frac{4}{3}(3-2x)\sqrt{x}-\frac{9-8x}{3}\sqrt{4x-3}-4(1-x)\log\frac{2+2\sqrt{x}}{1+\sqrt{4x-3}}
& \mathrm{if}~x \in [\frac{2}{3},1],
\end{cases}
\label{Deutsch.g}
\end{align}
\end{subequations}
where $\rmI_{[a,b]}(x)$ denotes the indicator function of the interval $[a,b]$.
Figure \ref{fig:3}(b) shows the excellent recovery of the inverse Abel transform $f_2(x)$, computed
from the noisy data displayed in Fig. \ref{fig:3}(a) ($\mathrm{SNR}=27.5\,\mathrm{dB}$), 
even in the present case where the function to be recovered is not analytic.

In the last example we consider the inverse Abel integral problem \eqref{1.1} with discontinuous solution.
The test Abel-pair is \cite{Ammari2}:

\begin{subequations}
\begin{align}
&f_3(t)=1-\rmI_{[0,\frac{1}{5}]}(t)+(1-t)\rmI_{[\frac{1}{5},\ud]}(t)
+(t-1)\rmI_{[\ud,\frac{7}{10}]}(t)+\ud\rmI_{[\frac{7}{10},1]}(t) \label{Ammari2.f} \\
&g_3(x)= 
\begin{cases}
0 & \!\!\!\!\mathrm{if}~x \in [0,\frac{1}{5}), \\[+3pt]
\frac{2}{3}(x-0.2)^\frac{3}{2}-(2x-4)(x-0.2)^\ud \equiv g_3^{(a)}(x) 
& \!\!\!\!\mathrm{if}~x \in [\frac{1}{5},\ud), \\[+3pt]
g_3^{(a)}(x)-\frac{4}{3}(x-0.5)^\frac{3}{2}+(4x-4)(x-0.5)^\ud \equiv g_3^{(b)}(x) & 
\!\!\!\!\mathrm{if}~x \in [\ud,\frac{7}{10}), \\[+3pt]
g_3^{(b)}(x)+\frac{2}{3}(x-0.7)^\frac{3}{2}-(2x-3)(x-0.7)^\ud & \!\!\!\!\mathrm{if}~x \in [\frac{7}{10},1].
\end{cases}
\label{Ammari2.g}
\end{align}
\end{subequations}
and the results are shown in Fig. \ref{fig:4}. 

\begin{figure}[tb]
\includegraphics[width=\textwidth]{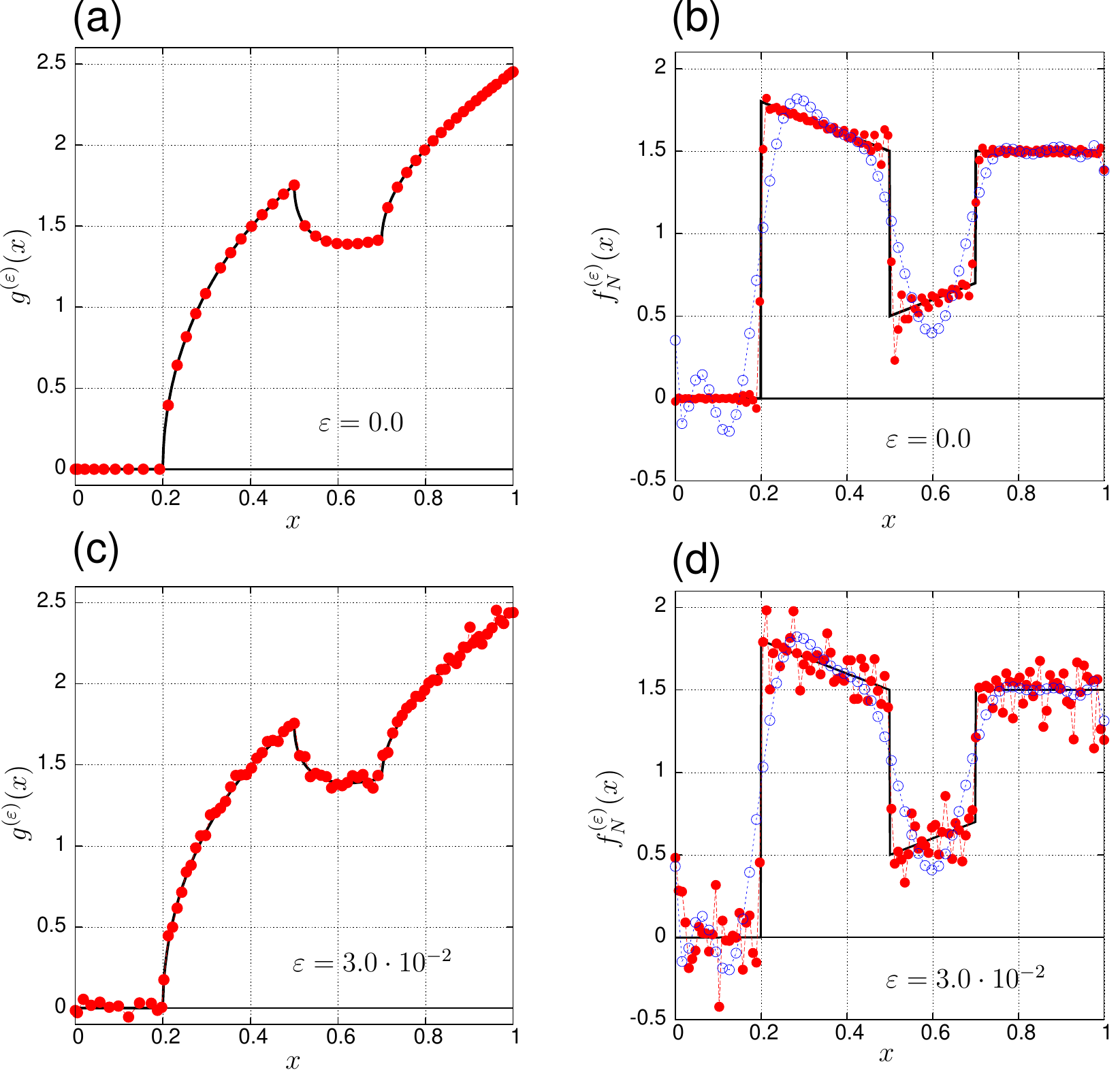} 
\caption{\label{fig:4} \small Analysis of the function $f_3(x)$ (see \eqref{Ammari2.f}).
(a): Abel transform $g_3(x)$ (solid line; see \eqref{Ammari2.g}). The filled dots represent the input data samples $g_3^{(\varepsilon)}(\sin^2 (x_j/2))$ (see \eqref{r.3}); The number of input samples is $N_s = 256$; 
$\varepsilon = 0.0$, i.e., the input data samples are affected only by roundoff error.
(b): Inverse Abel transform $f_3(x)$ (solid line) of the function $g_3(x)$. The open circles represent samples of the approximation $f_N^{(\varepsilon)}(x)$ (see \eqref{r.4}) computed from the input data shown in (a) with $N=15$. The filled circles are samples of $f_N^{(\varepsilon)}(x)$ with $N=128$.
(c): Noisy input data $g_3^{(\varepsilon)}(\sin^2 (x_j/2))$;
$\varepsilon = 3.0\cdot10^{-2}$ ($\mathrm{SNR}=22.0\,\mathrm{dB}$). 
(d) Inverse Abel transform $f_N^{(\varepsilon)}(x)$ computed from the data shown in (c) by Eq. \eqref{r.4}. Open circles: $N=15$; filled circles: $N=128$.
}
\end{figure}

Panels (a) and (b) refer to the case $\varepsilon=0.0$, whereas
panel (c) and (d) exhibit the analysis in the case of input data highly contaminated by Gaussian white noise 
($\mathrm{SNR}=22.0\,\mathrm{dB}$). Figure \ref{fig:4}(b) shows clearly that the convergence is only in the 
sense of the $L^2$-norm, with the appearance of the Gibbs phenomenon in the neighborhood of the discontinuities. 
The convergence in regions \emph{far} from the jumps is nonetheless very good. A similar behavior is displayed by 
Fig. \ref{fig:4}(d) where the inverse Abel transform $f_N^{(\varepsilon)}(x)$ is plotted with $N=15$ (open circles)
and $N=128$ (filled circles). For small values of $N$ the quality of reconstruction is comparable with the 
noiseless case of panel (b), whereas it deteriorates significantly for $N=128$ in view of the propagation
of the high level of noise affecting the input data (see Fig. \ref{fig:4}(c)).

\section{Conclusions}
\label{se:conclusions}

We have presented a new procedure for the computation of the inverse Abel transform. The solution is given 
in terms of a Legendre sum, whose coefficients are the Fourier coefficients of a suitable function associated 
with the data. The resulting algorithm is thus extremely simple and fast since the coefficients of the 
solution can be computed efficiently by means of a single FFT. This makes the algorithm particularly appropriate
for the Abel inversion of experimental data given on (even nonequispaced) 
points of the Abel transform domain since the
core of the computation, that is, the calculation of the Legendre coefficients $c_n^{(\varepsilon)}$,
can be simply performed by means of a nonuniform FFT.
The convergence of the solution has been proved and rigorous stability estimates have been given
in Propositions \ref{pro:1} and \ref{pro:2} and in Theorem \ref{the:1}
in terms of level of noise affecting the input data and of smoothness of the sought solution.
Finally, we have presented some numerical experiments which support the theoretical analysis, showing
the stability of the algorithm for the reconstruction of inverse Abel transform functions with 
different smoothness and various amounts of noise on the data.

\section*{Acknowledgement}
\noindent
This research has been partially funded by C.N.R. - Italy, Project MD.P01.004.001.

\end{document}